\newcommand{\comments}[1]{}
\newtheorem{theorem}{Theorem}
\newtheorem{lemma}{Lemma}
\newtheorem*{remark}{Remark}
\newtheorem{conjecture}{Conjecture}
\def \isnatural {\in\mathbb{N}}
\def\R{\mathbb{R}}
\def\C{\mathbb{C}}
\def\N{\mathbb{N}}
\def\H{\mathbb{H}}
\newcommand{\tef}{transcendental entire function}
\newcommand\qfor{\quad\text{for }}
\def\blfootnote{\xdef\@thefnmark{}\@footnotetext}
\begin{document}
%
%
%
%
\title[The bungee set in quasiregular dynamics]{The bungee set in quasiregular dynamics}
\author{Daniel A. Nicks, \, David J. Sixsmith}
\address{School of Mathematical Sciences \\ University of Nottingham \\ Nottingham
NG7 2RD \\ UK \\ ORCiD:0000-0002-9493-2970}
\email{Dan.Nicks@nottingham.ac.uk}
\address{Dept. of Mathematical Sciences \\
	 University of Liverpool \\
   Liverpool L69 7ZL\\
   UK \\ ORCiD: 0000-0002-3543-6969}
\email{djs@liverpool.ac.uk}
%
%
%
%
\begin{abstract}
In complex dynamics, the bungee set is defined as the set points whose orbit is neither bounded nor tends to infinity. In this paper we study, for the first time, the bungee set of a quasiregular map of transcendental type. We show that this set is infinite, and shares many properties with the bungee set of a {\tef}. By way of contrast, we give examples of novel properties of this set in the quasiregular setting. In particular, we give an example of a quasiconformal map of the plane with a non-empty bungee set; this behaviour is impossible for an analytic homeomorphism.
\end{abstract}
\maketitle
%
%
%
%
\blfootnote{2010 \itshape Mathematics Subject Classification. \normalfont Primary 37F10; Secondary 30C65, 30D05.}
\section{Introduction}
Suppose that $f$ is an entire function. In the study of complex dynamics it is common to partition the complex plane into two sets. Firstly, the \emph{Julia set} $J(f)$, which consists of points in a neighbourhood of which the iterates of $f$ are, in some sense, chaotic. Secondly, its complement the \emph{Fatou set} $F(f) := \C \setminus J(f)$. For more information on complex dynamics, including precise definitions of these sets, we refer to \cite{MR1216719}.

An alternative partition divides the plane into three sets based on the nature of the orbits of points; the \emph{orbit} of a point $z$ is the sequence $(f^n(z))_{n\geq 0}$ of its images under the iterates of $f$. This partition is defined as follows:
\begin{itemize}
\item The \emph{escaping set} $I(f)$ consists of those points whose orbit tends to infinity.
\item The \emph{bounded orbit set} $BO(f)$ consists of those points whose orbit is bounded.
\item The \emph{bungee set} $BU(f) := \C \setminus (I(f) \cup BO(f))$ contains all other points. 
\end{itemize}

Suppose that $P$ is a polynomial of degree greater than one. Then the escaping set $ I(P) $ is the basin of attraction of infinity, and so $I(P) \subset F(P)$. The set $BO(P)$ (usually in this context denoted by $K(P)$) is known as the \emph{filled Julia set} and has been extensively investigated, since $J(P) = \partial BO(P)$. It is well-known that $BU(P)$ is empty in this case.

The escaping set for a general \tef\ $ f $ was first studied by Eremenko \cite{MR1102727}, and has been the focus of much subsequent research in complex dynamics. 
The set $BO(f)$ for a {\tef} $f$ was studied in \cite{MR2869069} and \cite{MR3118409}. 
If $f$ is transcendental, then $BU(f)$ is non-empty; indeed the Hausdorff dimension of $BU(f) \cap J(f)$ is greater than zero \cite[Theorem 5.1]{OsSix}. The properties of $BU(f)$ were studied in \cite{OsSix} and subsequently in \cite{Lazebnik2017611, DaveDynamical}. Examples of {\tef}s with Fatou components in $BU(f)$ were given in \cite{Bish3, MR918638, Lazebnik2017611, FJL}. These sets are connected by the equation \cite{MR1102727, MR3118409, OsSix}
\begin{equation}
\label{boundaries}
J(f) = \partial BU(f) = \partial I(f) = \partial BO(f).
\end{equation}

To move the study of the bungee set into a more general setting, we consider the iteration of quasiregular and quasiconformal maps;
%
we refer to \cite{MR1238941, MR950174} for definitions. 
%
Suppose that $d \geq 2$, and that $f : \R^d \to \R^d$ is a quasiregular map with an essential singularity at infinity, in which case we say that $f$ is of \emph{transcendental type}. Following \cite{MR3009101, MR3265283}, we define the \emph{Julia set $J(f)$} as the set of all $x \in \R^d$ such that
\begin{equation*}
\operatorname{cap} \left(\R^d\backslash \bigcup_{k=1}^\infty f^k(U)\right) = 0,
\end{equation*}
for every neighbourhood $U$ of $x$. Here if $S \subset \R^d$, then we write cap $S = 0$ if $S$ has \emph{zero (conformal) capacity}, and otherwise we write cap $S > 0$. Again, we refer to \cite{MR1238941, MR950174} for a definition and properties of conformal capacity.

%
%

It is known that if $f$ is a quasiregular map of transcendental type, then the Julia set is infinite \cite[Theorem 1.1]{MR3265283}. It is easy to see that $J(f)$ is closed, and also that $J(f)$ is \emph{completely invariant}, in the sense that $x \in J(f)$ if and only if $f(x) \in J(f)$.

The definitions of $I(f), BO(f)$ and $BU(f)$ can be modified in an obvious way to apply to quasiregular maps of $\R^d$. In the quasiregular setting, the escaping set has been studied in \cite{MR2448586,MR3265357,MR3215194,danslow}, and the bounded orbit set in \cite{MR3265283}.  Our goal in this paper is to study $BU(f)$ in the case that $f$ is quasiregular and of transcendental type. 
%
Our first result shows that the bungee set of a quasiregular map of transcendental type is never empty, and in fact always meets the Julia set.
\begin{theorem}
\label{theo:itsinfinite}
Suppose that $f : \R^d \to \R^d$ is a quasiregular map of transcendental type. Then $BU(f) \cap J(f)$ is an infinite set.
\end{theorem}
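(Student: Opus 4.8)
The plan is to construct, by a nested-ball itinerary argument, a Julia point whose orbit both escapes past every prescribed radius and returns infinitely often to a fixed bounded ball, and then to run this construction inside disjoint neighbourhoods of distinct Julia points so as to produce infinitely many such points. I will use the following facts about a quasiregular map $f$ of transcendental type, all available from the foundational theory of quasiregular dynamics (e.g.\ \cite{MR3009101, MR3265283}): (i) $f$ is non-constant, hence open, so $f^k(V)$ is open whenever $V$ is; (ii) $J(f)$ is closed, completely invariant and infinite, and in particular $f^{-n}(J(f)) = J(f)$; (iii) $J(f)$ is unbounded; and (iv) the blowing-up property on the Julia set in the following strong form: there is a \emph{finite} exceptional set $\mathcal{E}$ such that for every $y \in J(f) \setminus \mathcal{E}$ and every open set $V$ with $V \cap J(f) \neq \emptyset$ there exists $k \geq 1$ with $y \in f^k(V)$. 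Property (iv) is the quasiregular analogue of the classical blowing-up property of the Julia set; it is a strengthening of the capacity condition $\operatorname{cap}(\R^d \setminus \bigcup_k f^k(U)) = 0$ appearing in the definition of $J(f)$, and (together with the Picard-type finiteness of $\mathcal{E}$) is precisely the input that the argument will lean on.

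Fix $b \in J(f) \setminus \mathcal{E}$, set $R_0 := |b| + 1$, and, using (iii) and the finiteness of $\mathcal{E}$, fix for each $j \in \N$ a point $a_j \in J(f) \setminus \mathcal{E}$ with $|a_j| > 2j$. Starting from a point $x_0 \in J(f)$, put $U_0 := B(x_0, r_0)$ (with $r_0 > 0$ to be chosen), $N_0 := 0$, $p_0 := x_0$. Inductively, suppose we have a ball $U_{j-1}$, a point $p_{j-1} \in U_{j-1} \cap J(f)$ and an integer $N_{j-1}$ with $f^{N_{j-1}}(p_{j-1}) \in J(f)$. First apply (iv) to the open set $f^{N_{j-1}}(U_{j-1})$, which contains the Julia point $f^{N_{j-1}}(p_{j-1})$, to get $m_j > N_{j-1}$ with $a_j \in f^{m_j}(U_{j-1})$; choosing a preimage $p' \in U_{j-1}$ of $a_j$ under $f^{m_j}$ (which lies in $J(f)$ by complete invariance) and using continuity, pick a ball $U' \ni p'$ with $\overline{U'} \subset U_{j-1}$ and $f^{m_j}(\overline{U'}) \subset \{\, y : |y| > j \,\}$. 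Then apply (iv) again to the open set $f^{m_j}(U') \ni a_j$ to get $N_j > m_j$ with $b \in f^{N_j}(U')$; choosing a preimage $p_j \in U'$ of $b$ under $f^{N_j}$ (again in $J(f)$) and using continuity, pick a ball $U_j \ni p_j$ with $\overline{U_j} \subset U'$, $\operatorname{diam} U_j < 2^{-j}$ and $f^{N_j}(\overline{U_j}) \subset B(0, R_0)$. This continues the induction, with $N_{j-1} < m_j < N_j$, so both $(m_j)$ and $(N_j)$ increase to infinity.

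Let $\xi$ be the unique point of $\bigcap_j \overline{U_j}$. Since $p_j \in \overline{U_j} \cap J(f)$, $p_j \to \xi$ and $J(f)$ is closed, we get $\xi \in J(f)$. Since $\xi \in \overline{U_j} \subset \overline{U'}$ at each stage, $|f^{m_j}(\xi)| > j$ for all $j$, so the orbit of $\xi$ is unbounded and $\xi \notin BO(f)$; since $\xi \in \overline{U_j}$ we also have $|f^{N_j}(\xi)| < R_0$ for all $j$, so the orbit of $\xi$ does not tend to infinity and $\xi \notin I(f)$. Hence $\xi \in BU(f) \cap J(f)$. To get infinitely many points, fix $n \in \N$; since $J(f)$ is infinite, choose distinct $x_0^{(1)}, \dots, x_0^{(n)} \in J(f)$ and $r_0 < \tfrac12 \min_{i \neq \ell} |x_0^{(i)} - x_0^{(\ell)}|$, and run the construction with $U_0 = B(x_0^{(i)}, r_0)$ to obtain $\xi^{(i)} \in \overline{B(x_0^{(i)}, r_0)} \cap BU(f) \cap J(f)$. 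The balls $\overline{B(x_0^{(i)}, r_0)}$ are pairwise disjoint, so the $\xi^{(i)}$ are distinct; as $n$ is arbitrary, $BU(f) \cap J(f)$ is infinite.

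The bookkeeping — openness of $f$, continuity, and the nesting and diameter estimates — is routine. The real content, and the step I expect to be the main obstacle, is passing from the capacity form of blowing-up stated in the definition of $J(f)$ to the $J(f)$-specific form (iv) with a fixed finite exceptional set: a priori $J(f)$ itself could have zero capacity, and so might be entirely contained in the capacity-zero ``bad set'' $\R^d \setminus \bigcup_k f^k(U)$ attached to a single neighbourhood $U$, which would leave us unable to guarantee that the selected preimages remain in $J(f)$. It is exactly (iv) (together with unboundedness of $J(f)$, or, as an alternative, aiming the escaping part of the itinerary at a point of $I(f) \cap J(f)$ and using that its orbit is unbounded) that keeps the whole nested sequence anchored in $J(f)$, so that the limit point inherits membership of $J(f)$; these facts must be imported from the quasiregular Fatou–Julia theory rather than derived here.
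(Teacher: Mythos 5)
Your proposal rests on property (iv): a blowing-up property for $J(f)$ with a \emph{finite} exceptional set $\mathcal{E}$ that is uniform over all open sets $V$ meeting $J(f)$. This is not available in the quasiregular setting, and you correctly flag the concern yourself at the end but then do not resolve it. What the definition of $J(f)$ gives, for a single neighbourhood $U$ of a Julia point, is only that $\R^d\setminus\bigcup_k f^k(U)$ has \emph{zero capacity}; a zero-capacity set in $\R^d$ can be infinite, indeed uncountable, and a priori it can vary with $U$. Rickman's Picard-type theorem (\cite[Theorem 1.2]{MR583633}) gives finiteness of the set of values omitted by $f$ near infinity, not finiteness of the exceptional set for the iterated blowing-up property. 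In consequence, (iv) as stated is not a theorem you may import, and the ``preimage in $J(f)$'' steps of your induction are unjustified as soon as cap $J(f) = 0$, since then $J(f)$ could lie entirely inside a zero-capacity bad set $\R^d\setminus\bigcup_k f^k(U)$.

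The paper's proof is forced to split into two cases precisely because of this. When cap $J(f) > 0$, it proceeds in the spirit you describe, but it never needs a finite exceptional set: it shows (via Rickman's theorem plus the fact that quasiregular maps send zero-capacity sets to zero-capacity sets) that the tails $J_n = J(f)\cap\{|x|>n\}$ have positive capacity, as does $J' = J(f)\cap B(R)$ for suitable $R$, and then the zero-capacity bad set cannot swallow these; that is the capacity argument that replaces your appeal to a finite $\mathcal{E}$. When cap $J(f) = 0$, the blowing-up mechanism is abandoned entirely: by \cite[Corollary 1.1]{MR3265283} the map has the pits effect, and Lemma~\ref{lem:dan} supplies explicit shells $A(s_n,t)$ whose images cover large balls, from which an oscillating orbit in $J(f)$ is built directly. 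Your proposal contains no analogue of this second case, so it does not prove the theorem in full. To repair it you would need, at minimum, to replace ``finite $\mathcal{E}$'' with ``zero-capacity exceptional set'' and then argue by capacity in the first case, and supply a separate argument (such as the pits-effect route) when cap $J(f) = 0$.
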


We now specialise to the case that the Julia set has positive capacity. In fact there are no known examples where the Julia set of a quasiregular map of transcendental type does not have positive capacity, and the following conjecture arises from \cite{MR3009101, MR3265283}.
\begin{conjecture}
\label{con1}
Suppose that $f : \R^d \to \R^d$ is a quasiregular map of transcendental type. Then cap $J(f) > 0$.
\end{conjecture}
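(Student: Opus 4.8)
The plan is first to reduce the conjecture to a statement about Hausdorff dimension, and then to produce a Cantor repeller of positive dimension inside $J(f)$. For the reduction, recall that a set $E\subset\R^d$ with $\operatorname{cap}E=0$ has Hausdorff dimension zero, while any set of positive Hausdorff dimension has positive conformal capacity; see \cite{MR1238941, MR950174} for these and related properties of capacity. Hence it suffices to show $\dim_H J(f)>0$, and it would be natural to aim directly for the stronger $\dim_H\big(BU(f)\cap J(f)\big)>0$, which would both imply Conjecture~\ref{con1} and strengthen Theorem~\ref{theo:itsinfinite}, in parallel with \cite[Theorem~5.1]{OsSix} in the transcendental entire case.

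To obtain positive dimension I would imitate the holomorphic construction of a Cantor repeller. Fix a point of $J(f)$ and a small ball $V$ about it meeting $J(f)$. Using the defining blowing-up property of the Julia set, together with complete invariance and the perfectness of $J(f)$, one looks for disjoint closed topological balls $V_1,V_2\subset V$, each in a small neighbourhood of a point of $J(f)$, and an integer $m\ge1$ with $f^m$ mapping each $V_i$ homeomorphically onto an open set strictly containing $\overline V$. Iterating, set $V_w=\{x\in V_{a_1}:f^{jm}(x)\in V_{a_{j+1}}\text{ for }1\le j\le k-1\}$ for each word $w=a_1\cdots a_k$ in $\{1,2\}$, and $K=\bigcap_k\bigcup_{|w|=k}V_w$. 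Then $K$ is a Cantor set on which $f^m$ is conjugate to the full shift; arranging the construction (if necessary with level-dependent data) so that intermediate iterates carry the pieces to points of arbitrarily large modulus before returning them over $V$ --- which is where transcendentality enters --- every point of $K$ has an orbit that returns to $V$ infinitely often yet is unbounded, so $K\subset BU(f)$, while $K\subset J(f)$ follows from the blowing-up property once $V$ meets $J(f)$. A mass-distribution estimate then bounds $\dim_H K$ below in terms of the branching number $2$ and the diameters of the pieces $V_w$, \emph{provided} those diameters do not shrink too fast between consecutive generations.

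The hard part is exactly this control on the geometry of the pieces. In the holomorphic case it is free from the Koebe distortion theorem; here the relevant branches of $f^{-m}$ are only $K^m$-quasiconformal, where $K$ is the dilatation of $f$, and such maps may distort round balls badly once $m$ is large, so a priori the generation-$k$ pieces could shrink super-exponentially while their number grows only like $2^k$, forcing $\dim_H K=0$. Carrying this through therefore seems to demand a distortion estimate for high iterates of quasiregular maps of transcendental type strong enough to guarantee an exponential lower bound on $\operatorname{diam}V_w$ --- perhaps by working with genuinely round ``islands'' furnished by a quasiregular Ahlfors islands theorem, by pushing further the modulus and capacity estimates of \cite{MR3009101, MR3265283}, or by transferring the construction from the escaping set studied in \cite{MR3265357, danslow}. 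An alternative that avoids dimension altogether would be to prove that $J(f)$ always contains a non-degenerate continuum, which automatically has positive conformal capacity; but this seems at least as hard, since the topology of $J(f)$ in the quasiregular setting is largely uncharted. I expect that any resolution of Conjecture~\ref{con1} will hinge on obtaining such quantitative control on preimages under high iterates.
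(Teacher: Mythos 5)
You are attempting to prove a statement that the paper explicitly leaves open: Conjecture~\ref{con1} is a conjecture, not a theorem, and the authors supply no proof. They only record sufficient conditions from the literature (dimension $d=2$, locally Lipschitz, bounded local index, from \cite{MR3265283}) and add one more of their own (Theorem~\ref{theo:capJ}, a growth condition), so there is nothing in the paper to compare your argument against line-by-line. Given that, the relevant question is whether your sketch is sound as far as it goes and whether the gap you flag is the real obstruction.

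Your reduction is correct: for conformal ($n$-)capacity in $\R^d$, a set of zero capacity has Hausdorff dimension zero, so $\dim_H J(f)>0$ would indeed give cap~$J(f)>0$, and a non-degenerate continuum in $J(f)$ would do as well. Your Cantor-repeller outline is the natural analogue of the holomorphic construction, and you are right that the crux is the absence of a Koebe-type distortion theorem: the inverse branches of $f^m$ are only $K^m$-quasiconformal with $K^m\to\infty$, so without some additional input the pieces $V_w$ can degenerate fast enough to kill the dimension bound. This is not a presentation gap you can paper over; it is exactly the open technical problem. Note also that the pits-effect dichotomy used in the paper cuts the other way here: by \cite[Corollary~1.1]{MR3265283}, cap~$J(f)=0$ forces $f$ to have the pits effect, under which $|f|$ is small only on tiny exceptional sets, so a putative counterexample would be highly constrained --- but nobody has ruled it out. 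In short, your proposal is a reasonable research programme and you have honestly located the missing ingredient, but it is not a proof, and the paper does not claim one either; if you want a provable statement, the growth hypothesis \eqref{eq:grows} of Theorem~\ref{theo:capJ} (or positive lower order) gives a clean sufficient condition by an entirely different route, via the fast escaping set and the fact that cap-zero sets are totally disconnected.
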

The next three theorems are the main results of this paper. The first two show that, for quasiregular maps of transcendental type, the first equality of \eqref{boundaries} need not hold in general, but we are guaranteed inclusion provided that the Julia set has positive capacity.
\begin{theorem}
\label{theo:capnonzero}
Suppose that $f : \R^d \to \R^d$ is a quasiregular map of transcendental type. If cap $J(f) >0$, then $BU(f) \cap J(f)$ is an infinite set and
\begin{equation}
\label{e:inclusions}
J(f) \subset \partial BU(f) \cap \partial I(f) \cap \partial BO(f).
\end{equation}
\end{theorem}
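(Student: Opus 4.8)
Since the assertion that $BU(f)\cap J(f)$ is infinite is precisely Theorem~\ref{theo:itsinfinite}, I would concentrate on the inclusions \eqref{e:inclusions}, and the plan is to reduce them to three density statements and then prove those with the blowing-up property of $J(f)$. For the reduction: $I(f)$, $BO(f)$ and $BU(f)$ are pairwise disjoint and cover $\R^d$, so the complement of any one of them contains the other two, and hence $\overline{A^{c}}\supseteq\overline{B}$ for distinct $A,B$ among these three sets. Therefore it would be enough to show
\[
J(f)\subseteq\overline{I(f)},\qquad J(f)\subseteq\overline{BO(f)},\qquad J(f)\subseteq\overline{BU(f)};
\]
granting these, for each such $A$ one gets $J(f)\subseteq\overline A\cap\overline{A^{c}}=\partial A$, which is exactly \eqref{e:inclusions}.

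The engine for the density statements is the blowing-up property: for $x\in J(f)$ and any open $U\ni x$, the complement of $\bigcup_{n\ge1}f^{n}(U)$ has zero capacity. Together with the trivial observation that prepending finitely many terms to an orbit alters neither its boundedness nor whether it tends to infinity, this lets one transfer orbit-types by taking preimages: if $y\in f^{n}(U)$ lies in $I(f)$ (respectively $BO(f)$, $BU(f)$) then any $w\in U$ with $f^{n}(w)=y$ lies in the same set. So to place a point of $I(f)$ near every point of $J(f)$ it suffices to exhibit a point of $I(f)$ outside the zero-capacity set omitted by blowing-up; since $\operatorname{cap}J(f)>0$, a convenient way to organise each of the three cases is to fix a \emph{non-exceptional} point $w_{0}\in J(f)$ already lying in the relevant set and to use that its backward orbit is dense in $J(f)$ and (by the preimage remark) lies wholly in that set. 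This is where the hypothesis $\operatorname{cap}J(f)>0$ enters, and is the reason one obtains inclusion rather than the equality $J(f)=\partial I(f)=\partial BO(f)=\partial BU(f)$ of \eqref{boundaries}. The required points of $J(f)$ would be supplied as follows: a bungee point by (the proof of) Theorem~\ref{theo:itsinfinite}; an escaping point by the non-emptiness, complete invariance and positive capacity of the fast escaping set \cite{MR3265357,danslow}; and a point with bounded orbit by \cite{MR3265283}.

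I expect $J(f)\subseteq\overline{BO(f)}$ to be the main obstacle: in the entire setting one simply invokes the density of repelling periodic points in $J(f)$, but no such tool is available for quasiregular maps, so one must either extract a suitable point of $BO(f)\cap J(f)$ from \cite{MR3265283} or argue directly — for instance, take $U$ a small round ball about a non-exceptional point of $J(f)$, use blowing-up to obtain $f^{n}(U)\supseteq\overline U$ for some $n$, and iterate this self-covering to capture a point whose whole orbit remains in the bounded set $\bigcup_{0\le j\le n}f^{j}(\overline U)$. The other recurring technical point is the bookkeeping with the exceptional set: one must check that the escaping, bounded-orbit and bungee points can genuinely be chosen outside the zero-capacity set discarded by the blowing-up property, which rests on that set having zero capacity while each of $I(f)$, $BO(f)$ and $BU(f)$ is too large to lie in a set of zero capacity (for $BU(f)$, this should again come out of the proof of Theorem~\ref{theo:itsinfinite}).
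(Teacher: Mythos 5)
Your reduction to the three density statements $J(f)\subseteq\overline{I(f)}$, $J(f)\subseteq\overline{BO(f)}$, $J(f)\subseteq\overline{BU(f)}$ is logically sound, and your routes to the first two via positive capacity of $A(f)$ and of $BO(f)$ combined with the blowing-up property would work — though the paper simply quotes $J(f)\subseteq\partial I(f)\cap\partial BO(f)$ from \cite[Theorem~1.3]{MR3265283} rather than rederiving it. The genuine gap is in your treatment of $J(f)\subseteq\overline{BU(f)}$. The mechanism you set up (fix one non-exceptional point $w_0$ lying in the relevant set and use its backward orbit) requires, for each neighbourhood $U$ of each $x\in J(f)$, that $w_0$ avoid the exceptional set $\R^d\setminus\bigcup_{k\geq1}f^{k}(U)$. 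You propose to guarantee this by asserting that $BU(f)$ ``is too large to lie in a set of zero capacity,'' deferring to the proof of Theorem~\ref{theo:itsinfinite}. But that proof only produces a countable family of bungee points, which says nothing about the capacity of $BU(f)$; no positive-capacity statement for $BU(f)$ is established in the paper, and this step is not justified.

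The fix — and what the paper actually does — is to observe that no backward orbit or capacity estimate for $BU(f)$ is needed. In the case $\operatorname{cap}J(f)>0$, the pullback construction via Lemma~\ref{lemm:l1} in the proof of Theorem~\ref{theo:itsinfinite} begins with an \emph{arbitrary} $x_1\in J(f)$ and an arbitrary small neighbourhood $U_1$ of $x_1$, and the point it produces lies in $\overline{U_1}\cap BU(f)\cap J(f)$. This already gives $J(f)\subseteq\overline{BU(f)}$ directly. Combined with $J(f)\subseteq\partial I(f)\cap\partial BO(f)$ from \cite[Theorem~1.3]{MR3265283} and your pairwise-disjointness observation, \eqref{e:inclusions} follows. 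So the framework is right, but for the $BU$ part you should read off density from the construction itself rather than attempt a single-point backward-orbit argument that rests on an unproven positive-capacity claim.
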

\begin{theorem}
\label{theo:JnotboundaryBU}
There is a quasiregular map of transcendental type $f : \R^2 \to \R^2$ such that cap $J(f) > 0$ and $J(f) \ne \partial BU(f)$.
\end{theorem}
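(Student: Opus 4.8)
**Plan for proving Theorem (there is a quasiregular map of transcendental type $f:\mathbb{R}^2\to\mathbb{R}^2$ with $\operatorname{cap} J(f) > 0$ and $J(f) \ne \partial BU(f)$):**

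The key idea: we need $\partial BU(f)$ to be strictly larger than $J(f)$ (since by Theorem 3, $J(f)\subset\partial BU(f)$ always holds when $\operatorname{cap}J(f)>0$). So I want to construct a map whose Fatou set (complement of the Julia set) contains boundary points of $BU(f)$. The natural strategy is to start from a well-understood transcendental entire function $g$ on $\mathbb{C}$ for which the dynamics are known — in particular one whose Julia set and bungee set are well-understood — and then modify it by a quasiconformal surgery or a "cut and paste" construction on a region far from the interesting dynamics, so that a piece of what was a nice Fatou component (say, a Baker domain, or an attracting basin) is altered in a way that creates bungee behaviour on part of it. Actually, the cleanest route is likely to build $f$ directly as a quasiregular map using a "Zorich-type" or explicit piecewise construction (as in the quasiregular analogues of the Fatou examples), engineering an invariant or periodic domain $U$ in the Fatou set on which the orbits of some points escape, the orbits of others stay bounded, and the orbits of a third set of points are bungee — with these sets having common boundary inside $U$, hence interior points of a Fatou component lying in $\partial BU(f)$.

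First I would fix a concrete transcendental-type quasiregular map that has a completely invariant (or forward-invariant) Fatou component $U$ with tractable dynamics — for instance a map behaving like $z\mapsto z/2$ near the origin (attracting) but with a "channel" to infinity, so that $U$ is large. Second, I would arrange, inside $U$ and along a ray or strip, three nested/interleaved subsets: points whose forward images march off to infinity along the channel ($I(f)$), points that fall into the attracting behaviour ($BO(f)$), and points trapped in oscillation ($BU(f)$); the construction should make a point $x_0\in U$ (necessarily in the Fatou set, since $U$ is a Fatou component) lie in $\partial BU(f)$ by ensuring every neighbourhood of $x_0$ meets all three sets. Third, I would verify $\operatorname{cap} J(f)>0$: this should come for free if the construction keeps $J(f)$ equal to (or containing) the Julia set of an honest entire function restricted to a standard piece, or by exhibiting $J(f)$ as containing a set of positive capacity such as a nondegenerate continuum. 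Fourth, I would confirm $f$ is genuinely quasiregular of transcendental type (bounded dilatation, essential singularity at $\infty$) — routine if the pieces are glued smoothly across real-analytic curves with controlled distortion.

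The main obstacle is the simultaneous control of all three orbit types accumulating at a single Fatou point while keeping the map globally quasiregular with uniformly bounded dilatation. In the transcendental entire setting, Fatou components in $BU(f)$ are produced by delicate constructions (Bishop-type, Bergweiler–Fagella–Jarque, etc.); here I have more flexibility because quasiregular maps are not rigid, so I expect the construction to be more elementary — but the gluing must still respect a global degree/branching structure and the Julia set must be kept away from the surgery region. A secondary technical point is ruling out that the constructed bungee behaviour actually forces the relevant points into $J(f)$: I must exhibit an explicit neighbourhood on which the iterates behave normally (e.g. are uniformly bounded, or omit a set of positive capacity uniformly), thereby certifying those points are in the Fatou set. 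I would handle this by building $U$ so that $f|_U$ is conjugate to a simple model map on which normality is manifest.
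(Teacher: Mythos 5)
Your overall strategy---engineer bungee behaviour inside a forward-invariant region that is disjoint from the Julia set, so that $\partial BU$ meets the complement of $J$---is indeed the same strategy the paper uses. The paper certifies the relevant region (the upper half-plane $\H$) avoids the Julia set in exactly the way you suggest: since $h(\H)\subset\H$, every orbit from a neighbourhood in $\H$ omits the lower half-plane, a set of positive capacity, so $\H\cap J(h)=\emptyset$. And positive capacity of $J(h)$ is automatic in dimension two, as the paper notes via \cite[Theorem~1.11]{MR3265283}.

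The genuine gap in your proposal is that it never supplies the mechanism that actually produces oscillating orbits inside a region avoiding the Julia set, and this is where all the work lies. The paper's crucial observation, which your sketch does not identify, is Theorem~\ref{theo:quasiconformalexample}: there exists a plane \emph{quasiconformal homeomorphism} $f$ with $BU(f)\neq\emptyset$. This is built explicitly, by mapping an unbounded ``snake'' region $\widetilde T\subset\H$ quasiconformally onto a coiled version of itself whose bends alternate between heights tending to infinity and a fixed compact set, and then pushing points along the snake by the map $\psi$ of \eqref{psidef}. The theorem you are asked to prove then follows by the simple composition $h=g\circ f$, where $g$ is a quasiregular map of transcendental type equal to the identity on $\H$ (taken from \cite[Section~6]{MR2448586}); since $g$ is the identity on $\widetilde T$, the map $h$ inherits $f$'s bungee and bounded-orbit points in $\H$, while $h(\H)\subset\H$ keeps $J(h)$ out. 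Your proposal gestures at Bishop/Bergweiler--Fagella--Jarque-type constructions and appeals to the ``extra flexibility'' of quasiregular maps, but you give no concrete device for making bounded and oscillating orbits coexist and share a boundary point away from $J$; the assertion that this should be ``more elementary'' in the quasiregular category is not backed up. Realising that a quasiconformal homeomorphism of the plane can itself have nonempty bungee set is precisely the non-obvious step the paper flags as ``perhaps somewhat surprising,'' and without it your plan does not close.
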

Our proof of Theorem~\ref{theo:JnotboundaryBU} relies on the following, perhaps somewhat surprising, result.
\begin{theorem}
\label{theo:quasiconformalexample}
There is a quasiconformal map $f : \R^2 \to \R^2$ such that $BU(f)\ne\emptyset$.
\end{theorem}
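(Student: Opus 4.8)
\section*{Proof proposal for Theorem~\ref{theo:quasiconformalexample}}

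The plan is to exhibit an explicit quasiconformal self-homeomorphism of $\R^2$ and then tune two real parameters inside it by a small-divisors construction. For $z\ne 0$ put $f(z)=e^{i\alpha}e^{g(\arg z)}z$, and set $f(0)=0$, where $\alpha\in\R$ and $g:\R\to\R$ is $C^1$, $2\pi$-periodic, with $\int_0^{2\pi}g=0$. Lifting $f$ through the conformal universal covering $\pi(x,y)=e^{x}(\cos y,\sin y)$ of $\R^2\setminus\{0\}$, the map $f$ becomes the shear $\tilde f(x,y)=(x+g(y),\,y+\alpha)$. This is a $C^1$ diffeomorphism of $\R^2$ (it commutes with the deck transformation $(x,y)\mapsto(x,y+2\pi)$, so does descend to $f$), whose Jacobian is upper triangular with unit diagonal and off-diagonal entry $g'(y)$; its determinant is $1$ and its dilatation is bounded by a constant depending only on $\|g'\|_\infty$. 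Hence $\tilde f$, and so $f$, is quasiconformal on $\R^2\setminus\{0\}$, and since $\{0\}$ has zero capacity this extends to a quasiconformal homeomorphism of $\R^2$. Thus $f$ is quasiconformal for every admissible $g$; everything now depends on the choice of $g$ and $\alpha$.

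The dynamics is transparent: $\arg f^n(z)=\arg z+n\alpha$, and
\[ \log|f^n(z)| = \log|z| + \sum_{k=0}^{n-1} g(\arg z + k\alpha), \]
so the modulus of an orbit is governed entirely by the Birkhoff sums $S_n g(\theta)=\sum_{k=0}^{n-1}g(\theta+k\alpha)$ of $g$ under the rotation $\theta\mapsto\theta+\alpha$. I would therefore try to choose $g$ and $\alpha$ so that, for $\theta$ in a set of positive measure, $\limsup_n S_n g(\theta)=+\infty$ while $\liminf_n S_n g(\theta)<+\infty$. For such a $\theta$, the point $z_0=e^{i\theta}$ satisfies $\limsup_n|f^n(z_0)|=\infty$, so $z_0\notin BO(f)$, and $\liminf_n|f^n(z_0)|<\infty$, so $z_0\notin I(f)$; hence $z_0\in BU(f)$.

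Here the flexibility of the quasiconformal category is essential. If $\alpha$ is of bounded type then the Denjoy--Koksma inequality keeps the Birkhoff sums of any such $g$ bounded, so $\alpha$ must be taken to be Liouville. I would construct $\alpha=[0;a_1,a_2,\dots]$ and $g=\sum_{n\ge 1}c_n\cos(q_n\theta)$ together by induction, $q_n$ being the denominator of the $n$th convergent of $\alpha$. Taking $c_n=(n^2q_n)^{-1}$ makes $\sum_n c_n q_n$ convergent, so $g$ is $C^1$ no matter how the construction continues. Given $a_1,\dots,a_n$ (hence $q_n$, $c_n$), I would then pick $a_{n+1}$ --- equivalently $q_{n+1}$ --- so large that at a suitable time $N_n$ comparable to $q_{n+1}$ the $n$th Fourier mode of $g$ contributes a term of size comparable to $c_n q_{n+1}\gg n$ to $S_{N_n}g$, while all other modes contribute only $O(1)$ there: modes $m>n$ because $N_n\le q_m$ bounds their contribution by the convergent tail $\sum_{m>n}c_mq_m$, and modes $m<n$ by a quantity already determined before $q_{n+1}$ is chosen, which we simply arrange $c_nq_{n+1}$ to dominate. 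For each $n$ this forces $S_{N_n}g(\theta)\ge n/2$ (say) on a set of $\theta$ whose measure is bounded below independently of $n$, and a reverse-Fatou argument then produces a positive-measure set of $\theta$ with $\limsup_n S_n g(\theta)=+\infty$. The complementary estimate $\liminf_n S_n g(\theta)\le 0$ holds for almost every $\theta$ by Atkinson's recurrence theorem, using only that $\int g=0$ and that the rotation is ergodic; intersecting the two sets gives the required bungee point.

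The main obstacle is precisely this simultaneous construction. Quasiconformality forces $g$ to be smooth, i.e.\ its Fourier coefficients small relative to the $q_n$, whereas unbounded Birkhoff sums force $g$ to resonate with the exceptionally good rational approximants of a Liouville $\alpha$, i.e.\ $c_n q_{n+1}\to\infty$; these can be reconciled only because the continued fraction of $\alpha$ may be prescribed to grow arbitrarily fast after each $c_n$ has been fixed. The remaining bookkeeping --- verifying that the resonant mode really dominates at the times $N_n$, uniformly enough in $\theta$ to catch a positive-measure set of starting angles on which both recurrence and unboundedness hold --- is routine once the inductive choices are set up.
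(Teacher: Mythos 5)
Your proposal is correct, and the approach is genuinely different from the one in the paper. You construct a \emph{linear twist} $f(z)=e^{i\alpha}e^{g(\arg z)}z$, whose logarithmic lift is a uniformly bounded shear, and reduce the existence of bungee points to a small-divisors statement about Birkhoff sums of a resonant Fourier series over a Liouville rotation. The escape/return oscillation is produced dynamically: $\limsup_n S_n g=+\infty$ on a positive-measure set of angles via the resonance between the Fourier modes $\cos(q_n\theta)$ and the exceptionally good convergents $p_n/q_n$ of $\alpha$, while $\liminf_n S_n g\le 0$ almost everywhere by Atkinson's recurrence theorem, since $\int g=0$ and the rotation is ergodic. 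The paper instead builds a purely geometric example: a quasiconformal map equal to the identity outside a thin ``coiled snake'' $\widetilde T\subset\H$ and, inside, to a conjugate of an escaping translation on a straight strip; the coiling of the strip forces the escaping orbit to pass alternately close to and far from the origin, giving a bungee orbit by inspection. Each route has its advantages. Yours yields a \emph{positive-measure} set of bungee points (not just a single orbit), and it is modular in a pleasant way: quasiconformality is fixed once and for all by $\sum c_n q_n<\infty$, and all the work is pushed into the Diophantine choice of $\alpha$, which can be made after each $c_n$ is committed. The paper's construction, by contrast, is tailored so that the map is the identity off a compactly-bounded-real-part region in $\H$; this is exactly what is needed to compose it with the quasiregular map $g$ in the proof of Theorem~\ref{theo:JnotboundaryBU}, so that result comes essentially for free. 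A global twist like yours does not have this property, and you would need a different argument (or a localisation of your construction) to deduce Theorem~\ref{theo:JnotboundaryBU} from it.

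A couple of points worth tightening if you write this up in full. First, the contribution of the low modes $m<n$ to $S_{N_n}g$ is bounded, uniformly in time, by $\sum_{m<n}2c_m/|e^{2\pi i q_m\beta}-1|\lesssim\sum_{m<n}c_m q_{m+1}$; this quantity is indeed determined once $q_1,\dots,q_n$ are fixed (it is potentially very large, but that is harmless since $q_{n+1}$ is chosen afterwards). Second, the lower bound on the measure of $\{\theta:S_{N_n}g(\theta)\ge n/2\}$ comes from the fact that the resonant mode contributes $c_n\frac{\sin(\pi q_n N_n\beta)}{\sin(\pi q_n\beta)}\cos\!\bigl(q_n\theta+\pi q_n(N_n-1)\beta\bigr)$, and $|\cos|\ge 1/2$ on a set of $\theta$ of measure $2\pi/3$ regardless of $q_n$; this uniform-in-$n$ lower bound is what makes the reverse-Fatou step go through. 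Both are exactly the ``routine bookkeeping'' you flag, and they do work out.
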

If $f : \R^2 \to \R^2$ is an analytic homeomorphism, in other words an affine map, the dynamics of $f$ are not particularly interesting; certainly we have that $BU(f) = \emptyset$. Theorem~\ref{theo:quasiconformalexample} shows that this is not the case for quasiconformal maps of the plane.
\begin{remark}\normalfont
Suppose that $f : \R^d \to \R^d$ is a quasiregular map \emph{not} of transcendental type. Suppose also that the degree of $f$ is sufficiently large compared to the distortion of $f$; in technical terms we require that deg $f > K_I(f)$. It is shown in \cite[p.28]{MR2755919} (see also \cite{ETS:9408364}) that $I(f)$ contains a neighbourhood of infinity, and so $BU(f)$ is empty.
\end{remark}
Finally, returning to Conjecture~\ref{con1},  we note that there are many conditions known to be sufficient for Conjecture~\ref{con1} to hold. For example, the Julia set of a quasiregular map of transcendental type $f : \R^2 \to \R^2$ is always of positive capacity \cite[Theorem 1.11]{MR3265283}, so this part of Theorem~\ref{theo:JnotboundaryBU} is immediate. The paper \cite{MR3265283} gives many other sufficient conditions; for example, if $f$ is locally Lipschitz or has bounded local index. In the following we add to this list a simple condition on the growth of the function; roughly speaking, all functions that do not grow too slowly have a Julia set of positive capacity. Here, for $r > 0$, we define the \emph{maximum modulus function} by
\[
M(r, f) :=  \max_{|x| = r} |f(x)|.
\]
\begin{theorem}
\label{theo:capJ}
Suppose that $f : \R^d \to \R^d$ is a quasiregular map of transcendental type. Suppose also that
\begin{equation}
\label{eq:grows}
\liminf_{r\rightarrow\infty} \frac{\log \log M(r, f)}{\log \log r} = \infty.
\end{equation}
Then cap $J(f)>0$.
\end{theorem}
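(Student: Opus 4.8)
The plan is to deduce from \eqref{eq:grows} that $J(f)$ contains a non-degenerate continuum; this suffices, because a set of zero conformal capacity is totally disconnected, so such a continuum has positive capacity, and hence so does any set containing it (see \cite{MR1238941}). By \cite[Theorem~1.11]{MR3265283} the statement is already known when $d=2$, so the real content lies in dimensions $d \geq 3$, although the argument is not essentially dimension-specific. It is convenient to restate the hypothesis: \eqref{eq:grows} holds if and only if for every $C>0$ there is $R_C>0$ with
\[
\log M(r,f) \geq (\log r)^{C}, \qquad\text{equivalently}\qquad M(r,f) \geq \exp\!\big((\log r)^{C}\big), \qquad r \geq R_C.
\]
Fixing a large $R$ and setting $\mu_0 := R$, $\mu_{n+1} := M(\mu_n, f)$, this makes the sequence $(\mu_n)$ grow so fast that both $\mu_{n+1}/\mu_n$ and $\log\mu_{n+1}/\log\mu_n$ tend to infinity as rapidly as we please; this is the quantitative input for what follows.

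Next I would prove that, for a suitable $R$, the level set of the fast escaping set
\[
A_R(f) := \{\, x \in \R^d : |f^n(x)| \geq \mu_n \text{ for all } n \geq 0 \,\}
\]
(a set studied in the quasiregular setting alongside the escaping set, cf.\ \cite{MR3265357, MR3215194}) is an \emph{$\R^d$-spider's web}: a closed, connected set that meets $\{|x| = \rho\}$ for an unbounded set of radii $\rho$ and has only bounded complementary components. The mechanism is the familiar one from the entire case: since $\mu_{n+1} = M(\mu_n, f)$ is enormous compared with $\mu_n$, the map $f$ carries the spherical shell between $\{|x| = \mu_n\}$ and a slightly larger sphere onto a region that engulfs, say, $\{\, \mu_{n+1} \leq |y| \leq \mu_{n+1}^{2} \,\}$; combining this covering property with the distortion estimates for $K$-quasiregular maps of spherical shells (as in \cite{MR1238941}) lets one pull back, inside $A_R(f)$, a nested sequence of topological spheres separating $\{|x| \leq \mu_n\}$ from $\infty$. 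The point of \eqref{eq:grows} is precisely that it makes this induction close up — the pulled-back shells remain non-degenerate however fast the quasiconstant $K^n$ of $f^n$ grows — so that $A_R(f)$ surrounds arbitrarily large spheres and is connected. This is the quasiregular analogue of the theorems of Rippon and Stallard that extract an $A_R(f)$ spider's web from sufficiently fast growth.

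From the spider's web structure I would then extract a non-degenerate continuum lying in $J(f)$. In this situation one has $J(f) = \partial A(f)$, and each loop of the web is a non-degenerate continuum contained in $\partial A(f)$; that such continua lie in $J(f)$ follows from the capacity-based definition of the Julia set together with the blow-up property established in \cite{MR3009101, MR3265283}, since every point on such a loop has arbitrarily small neighbourhoods whose forward images — because of the expansion built into the shells above — cover all of $\R^d$ outside a set of zero capacity. Hence $J(f)$ contains a non-degenerate continuum, and therefore $\operatorname{cap} J(f) > 0$ by monotonicity of capacity.

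I expect the spider's web step to be the main obstacle. Controlling the geometry of the pulled-back shells under iteration is delicate because the quasiconstant of $f^n$ deteriorates like $K^n$, and one must verify that the rapid growth forced by \eqref{eq:grows}, rather than mere transcendental type, is what overcomes this deterioration and makes the nested construction succeed. A secondary, more bookkeeping-type difficulty is to confirm, using only the capacity-based definition of $J(f)$ from \cite{MR3009101, MR3265283}, that the continua produced genuinely lie in $J(f)$ and not merely in its closure.
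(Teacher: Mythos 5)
Your proposal takes a genuinely different route from the paper, but its central step is false. You propose to deduce from \eqref{eq:grows} that $A_R(f)$ is a spider's web, and then to extract non-degenerate continua in $J(f) = \partial A(f)$ from the loops of the web. However, \eqref{eq:grows} does not imply that $A_R(f)$ is a spider's web, even for entire functions of the plane. The exponential map $f(z)=e^z$ satisfies $\log\log M(r,f) = \log r$, so \eqref{eq:grows} holds, yet $A_R(e^z)$ is a union of ``hairs'' forming a Cantor bouquet and is certainly not a spider's web; more generally, Rippon and Stallard showed that $A_R(f)$ is never a spider's web for entire $f$ in the Eremenko--Lyubich class $\mathcal{B}$, regardless of growth. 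So the nested-shell induction you outline cannot close up for such $f$, and the growth condition alone does not give you the topological structure you need. (A secondary confusion: if a loop were known to lie in $\partial A(f)$, it would lie in $J(f)$ immediately via $J(f)=\partial A(f)$, so the appeal to the blow-up property is redundant; but you never establish that the loops are in $\partial A(f)$ rather than in the interior of $A(f)$.)

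The paper sidesteps all of this. Like you, it begins from the fact that \eqref{eq:grows} forces $J(f) = \partial A(f)$ by \cite[Theorem~1.2]{MR3265357}. It then observes that $A(f)$ has positive capacity because it contains continua \cite[Theorem~1.2]{MR3215194}, and that $\R^d \setminus A(f)$ has positive capacity because it contains $BO(f)$ \cite[Theorem~1.4]{MR3265283}. If $\partial A(f)$ had zero capacity it would be totally disconnected by \cite[Corollary~2.2.5]{MR1238941}, so $\R^d\setminus\partial A(f)$ would be connected, forcing either $A(f)\subset\partial A(f)$ or $\R^d\setminus A(f)\subset\partial A(f)$; each is impossible because a set of positive capacity cannot lie in a set of zero capacity. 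This uses only cited facts and an elementary connectedness argument, and never requires knowing anything about the global topology of $A(f)$ beyond those two capacity estimates.
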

\begin{remark}\normalfont
A quasiregular map $f : \R^d \to \R^d$ has \emph{positive lower order} if there exist $r_0 > 0$ and $\epsilon > 0$ such that
\[
M(r, f) > \exp r^\epsilon, \qfor r \geq r_0.
\]
It is easy to see that a quasiregular map with positive lower order satisfies \eqref{eq:grows}.
\end{remark}
\subsection*{Notation}
For $0 < r_1 < r_2$, we denote the spherical shell centred at the origin by
\[
A(r_1,r_2) := \{x \in \R^d : r_1 < |x| < r_2\},
\]
and the ball with centre at the origin and radius $r_1$ by
\[
B(r_1) := \{x \in \R^d : |x| < r_1\}.
\]

Finally, if $S \subset \R^d$, then we denote the boundary of $S$ in $\R^d$ by $\partial S$, and closure of $S$ in $\R^d$ by $\overline{S}$.

%
%
%
\section{Proof of Theorem~\ref{theo:itsinfinite} and Theorem~\ref{theo:capnonzero}}
\label{S:itsinfinite}
We use the following result. This is a version of \cite[Lemma 3.1]{Sixsmithmax} stated for quasiregular maps. The proof is omitted, as it is almost identical to the proof of the original. 
\begin{lemma}
\label{lemm:l1}
Suppose that $(E_n)_{n\isnatural}$ is a sequence of compact sets in $\R^d$ and $(m_n)_{n\isnatural}$ is a sequence of integers. Suppose also that $f : \R^d \to \R^d$ is a quasiregular map such that $E_{n+1} \subset f^{m_n}(E_n )$, for $n\isnatural$. Set $p_n := \sum_{k=1}^n m_k$, for $n\isnatural$. Then there exists $\zeta\in E_1$ such that 
\begin{equation}
\label{feq}
f^{p_n}(\zeta) \in E_{n+1}, \qfor n\isnatural.
\end{equation}
If, in addition, $E_n \cap J(f) \ne \emptyset$, for $n\isnatural$, then there exists $\zeta \in E_1 \cap J(f)$ such that (\ref{feq}) holds.
\end{lemma}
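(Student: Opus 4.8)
The plan is a routine nested-compactness argument. Fix $N\isnatural$ for the moment. I would build, by downward induction on $n$, a finite chain of sets $T_{N+1}\subset E_{N+1},\,T_N\subset E_N,\dots,T_1\subset E_1$ (all depending on $N$) as follows: put $T_{N+1}:=E_{N+1}$ and, for $n=N,N-1,\dots,1$, set $T_n:=E_n\cap (f^{m_n})^{-1}(T_{n+1})$. Each $T_n$ is a closed subset of the compact set $E_n$, hence compact. Non-emptiness propagates downwards: if $T_{n+1}\ne\emptyset$, choose $y\in T_{n+1}\subset E_{n+1}\subset f^{m_n}(E_n)$; then $y$ has a preimage $x\in E_n$ under $f^{m_n}$, and by construction $x\in T_n$. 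Since $T_{N+1}=E_{N+1}\ne\emptyset$, all the $T_n$ are non-empty.

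Next I would check that every $x\in T_1$ satisfies the first $N$ instances of \eqref{feq}. Indeed $f^{p_1}(x)=f^{m_1}(x)\in T_2\subset E_2$, and inductively, if $f^{p_{n-1}}(x)\in T_n\subset (f^{m_n})^{-1}(T_{n+1})$ then $f^{p_n}(x)=f^{m_n}\bigl(f^{p_{n-1}}(x)\bigr)\in T_{n+1}\subset E_{n+1}$. Hence the set
\[
H_N:=\{x\in E_1:\ f^{p_n}(x)\in E_{n+1}\text{ for }1\le n\le N\}
\]
is non-empty; it is compact, being a closed subset of $E_1$, and $H_{N+1}\subset H_N$. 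A nested sequence of non-empty compact sets has non-empty intersection, so there exists $\zeta\in\bigcap_{N\isnatural}H_N$, and this $\zeta$ satisfies \eqref{feq} for all $n\isnatural$.

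For the additional statement I would run the identical argument, but start the downward induction from $T_{N+1}:=E_{N+1}\cap J(f)$, which is non-empty by hypothesis and compact because $J(f)$ is closed. Complete invariance of $J(f)$ gives $(f^{m_n})^{-1}(J(f))=J(f)$, so each $T_n$ is automatically contained in $J(f)$, and the preimage $x$ selected in the inductive step lies in $E_n\cap J(f)$. Thus the sets $H_N\cap J(f)$ are non-empty, nested and compact, and a point in their intersection is the desired $\zeta\in E_1\cap J(f)$.

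I do not anticipate a serious obstacle; the one point that needs a little care is that the maps $f^{m_n}$ are typically not injective, so one cannot simply follow a single backward orbit — instead one must pull the membership conditions back through the finite chain $T_{N+1},\dots,T_1$ and only then extract a limit point by compactness. The handling of $J(f)$ is entirely controlled by its closedness and complete invariance, both recorded above.
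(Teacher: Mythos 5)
Your proof is correct and uses essentially the same nested-compactness argument as the original to which the paper defers (Lemma 3.1 of [Six15]): pull the membership conditions backwards through the finite chain to see that the nested compact sets $H_N$ are non-empty, then take a point in the intersection, with the closedness and complete invariance of $J(f)$ handling the refined statement.
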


We need the following, which is taken from \cite[Lemma 3.3]{danslow} and \cite[Lemma~3.4]{danslow}. Here a quasiregular map $f : \R^d \to \R^d$ of transcendental type has the \emph{pits effect} if there exists $n \in \N$ such that, for all $c > 1$ and $\epsilon > 0$, there exists $r_0$ such that if $r > r_0$, then the set
\[
\{x \in \R^d : r \leq |x| \leq cr, |f(x)| \leq 1\}
\]
can be covered by $n$ balls of radius $\epsilon r$.
\begin{lemma}
\label{lem:dan}
Suppose that $f : \R^d \to \R^d$ is a quasiregular map of transcendental type that has the pits effect. Then there exist increasing sequences of positive real numbers $(s_n)_{n\in\N}$ and $(t_n)_{n\in\N}$, both tending to infinity, such that, for $t \geq t_n$, 
\begin{equation}
\label{eq:dan}
f(A(s_n , t)) \supset B(2t), \qfor n \in \N.
\end{equation}
\end{lemma}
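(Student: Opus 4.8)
The plan is to run a topological degree (winding number) argument, using the rapid growth of a map of transcendental type together with the pits effect. As preliminaries I would record the following. Since $f$ is of transcendental type, $\log M(r,f)/\log r\to\infty$ as $r\to\infty$, so $M(r,f)$ eventually exceeds every fixed power of $r$; moreover $|f|$ obeys a maximum principle, so $\max_{\overline{B(r)}}|f|=M(r,f)$. As $f$ is open and discrete with every local index $i(x,f)\geq 1$, for a bounded domain $D$ and $y\notin f(\partial D)$ the degree $\mu(y,f,D)=\sum_{x\in D,\,f(x)=y}i(x,f)$ is a non-negative integer, is constant on the components of $\R^d\setminus f(\partial D)$, is additive over $D$, and is positive exactly when $y\in f(D)$. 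Finally, by Rickman's Picard theorem $f$ omits only finitely many values, so I may fix $a\in\R^d$ with $|a|<1$ that $f$ attains infinitely often; then $\mu(a,f,B(r))\to\infty$ as $r\to\infty$.

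Applying the pits effect with $c=2$ and $\epsilon$ small (depending only on the number of covering balls), at every large scale $r$ the set $\{x:r\leq|x|\leq 2r,\ |f(x)|\leq 1\}$ lies in finitely many balls of radius $\epsilon r$, so there is a radius $\rho\in(r,2r)$ with $|f|>1$ on all of $S(\rho)$; call such a radius \emph{good}. It is convenient to use the strengthened pits-effect estimate (standard for maps with the pits effect), in which the bound $|f(x)|\leq 1$ is replaced by $|f(x)|\leq M(|x|,f)^{\delta}$ for a fixed $\delta\in(0,1)$; then $|f|>M(\rho,f)^{\delta}$ on a good sphere $S(\rho)$. I would fix an increasing sequence $(s_n)$ of good radii tending to infinity and, for each $n$, take $t_n$ large enough for the estimate below to apply.

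The main step, for $t\geq t_n$: since $t$ is large, choose good radii $\sigma\in(s_n,2s_n)$ and $\tau\in(t/2,t)$ with $s_n<\sigma<\tau<t$, so that $\overline{A(\sigma,\tau)}\subseteq A(s_n,t)$. For $y\notin f(S(\sigma)\cup S(\tau))$, additivity gives $\mu(y,f,A(\sigma,\tau))=\mu(y,f,B(\tau))-\mu(y,f,B(\sigma))$. The subtracted term vanishes for $|y|>M(\sigma,f)$ (maximum principle on $\overline{B(\sigma)}$) and is bounded on $\overline{B(M(\sigma,f))}$, hence is at most a finite constant $D_n$ depending only on $s_n$. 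For the first term, $\mu(y,f,B(\tau))$ equals $\mu(a,f,B(\tau))$ on the component of $\R^d\setminus f(S(\tau))$ containing $a$, and it changes only as $y$ crosses $f(S(\tau))$; but $\{x\in S(\tau):|f(x)|<2t\}\subseteq\{x\in S(\tau):|f(x)|<M(\tau,f)^{\delta}\}$ lies in the finitely many pit balls at scale $\tau$ (since $M(\tau,f)^{\delta}\gg 2t$, as $\tau>t/2$ and $M$ grows super-polynomially), so $f(S(\tau))\cap B(2t)$ is contained in the image of those few small caps, and a winding-number estimate gives $\mu(y,f,B(\tau))\geq\mu(a,f,B(\tau))-C$ for all $y\in B(2t)\setminus f(S(\tau))$, with $C$ independent of $t$. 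If $t_n$ was chosen so that $\mu(a,f,B(r))>D_n+C$ for $r\geq t_n/2$ (possible as $\mu(a,f,B(r))\to\infty$), then $\mu(y,f,A(\sigma,\tau))>0$ for all $y\in B(2t)\setminus f(S(\sigma)\cup S(\tau))$, whence $B(2t)\setminus f(S(\sigma)\cup S(\tau))\subseteq f(A(\sigma,\tau))$ and therefore
\[
f(A(s_n,t))\ \supseteq\ f\bigl(\overline{A(\sigma,\tau)}\bigr)\ =\ f(A(\sigma,\tau))\cup f\bigl(S(\sigma)\cup S(\tau)\bigr)\ \supseteq\ B(2t).
\]
Taking the $(t_n)$ increasing to infinity completes the proof.

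The step I expect to be the main obstacle is the winding-number estimate used above — that $\mu(\cdot,f,B(\tau))$ can drop by only a bounded amount across $B(2t)$, equivalently that the image of the good sphere $S(\tau)$ meets $B(2t)$ in a controlled way. This is precisely where the pits effect does its work (few and small pits at every scale, combined with the super-polynomial growth of $M$), and making the count of crossings of $f(S(\tau))$ along a path in $B(2t)$ precise in terms of the pit caps is the real content; the rest of the argument is routine bookkeeping. The use of the strengthened form of the pits effect above is also where one should be careful, since only the weak form has been recorded here.
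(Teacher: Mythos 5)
The paper does not actually present a proof of this lemma: it cites \cite[Lemmas 3.3 and 3.4]{danslow} and notes that the stronger covering $f(A(s_n,t))\supset B(2t)$ (instead of $A(s_n,2t)$) follows from the proof given there. So there is no in-paper argument to compare against, but your reconstruction via topological degree together with a strengthened form of the pits effect is in the spirit of the cited source, and the argument as you have written it does work, provided that strengthened pits estimate (which is indeed established in \cite{MR3265283} and is essentially what \cite[Lemma 3.3]{danslow} records) is granted.

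One important remark, though: the step that you yourself flag as ``the main obstacle'' --- the ``winding-number estimate'' bounding how much $\mu(\cdot,f,B(\tau))$ can drop across $B(2t)$ --- is in fact a non-issue under your own choices. You take $\tau\in(t/2,t)$ to be a good radius in the \emph{strengthened} sense, so that $|f|>M(\tau,f)^{\delta}$ on all of $S(\tau)$. Since $f$ is of transcendental type, $M(\tau,f)^{\delta}>2t$ once $t$ is large, and therefore $f(S(\tau))\cap B(2t)=\emptyset$. Consequently $B(2t)$ lies in a single component of $\R^d\setminus f(S(\tau))$ and $\mu(y,f,B(\tau))$ is \emph{constant} on $B(2t)$; the constant $C$ in your estimate may be taken to be $0$, and the paragraph about pit balls on $S(\tau)$ and counting crossings of $f(S(\tau))$ is vacuous (the set $\{x\in S(\tau):|f(x)|<2t\}$ is empty by construction). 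Similarly, $\sigma$ need not be a good radius at all for your argument: the only fact used about $\sigma$ is that $\mu(\cdot,f,B(\sigma))$ is uniformly bounded by a constant $D_n$ depending only on $s_n$, which holds for any $\sigma$ with $\overline{B(\sigma)}$ compactly contained in the domain. The argument as a whole then reduces to: pick a strengthened-good $\tau\in(t/2,t)$; observe $\mu(\cdot,f,B(\tau))$ is constant and, for a non-omitted value $a$ with $|a|<1$, equals $n(a,B(\tau))\to\infty$; subtract the bounded contribution of $B(\sigma)$; conclude the degree on $A(\sigma,\tau)$ is positive throughout $B(2t)\setminus f(S(\sigma)\cup S(\tau))$, and finish as you did by passing to $f(\overline{A(\sigma,\tau)})\subset f(A(s_n,t))$. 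The genuine external input is the strengthened pits effect (the version with threshold $r^{N}$, established in \cite{MR3265283}, already suffices here since $\tau^{2}>2t$ for $\tau>t/2$ and $t$ large); you were right to flag that as the thing to check.
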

Note that \cite[Lemma 3.4]{danslow} states $f(A(s_n , t)) \supset A(s_n ,2t)$ in place of \eqref{eq:dan}. Our stronger statement is easily derived from the proof of \cite[Lemma 3.4]{danslow}. 

\begin{proof}[Proof of Theorem~\ref{theo:itsinfinite} and Theorem~\ref{theo:capnonzero}]
Suppose that $f : \R^d \to \R^d$ is a quasiregular map of transcendental type. The proof splits into two cases: cap $J(f) > 0$ and cap $J(f) = 0$.

We consider first the case that cap $J(f) > 0$. Pick $R>0$ sufficiently large that cap~$J' > 0$, where $J' := J(f) \cap B(R)$. For each $n \in \N$ set 
\[
J_n := J(f) \cap \{ x \in \R^d : |x| > n \}.
\]
It follows from \cite[Theorem~1.2]{MR583633}, which is the quasiregular analogue of Picard's great theorem, together with complete invariance, that $J(f) \setminus f(J_n)$ is a finite set, for $n \in \N$, and so has capacity zero. If cap $J_n = 0$, then cap $f(J_n) = 0$ (see, for example, \cite[Theorem 10.15]{MR950174}) and so cap $J(f) \setminus f(J_n) > 0$. This is a contradiction. Hence cap $J_n > 0$, for $n \in \N$. 

Choose a point $x_1 \in J(f)$, and let $U_1$ be a neighbourhood of $x_1$ of diameter at most one. It follows from the definition of the Julia set that cap $(\R^d \setminus \bigcup_{k\in\N} f^k(U_1)) = 0$, and so there exist $m_1 \in \N$ and $x_1' \in U_1$ such that 
\[ 
x_2 := f^{m_1}(x_1') \in J_2.
\]
Let $U_1' \subset U_1$ be a neighbourhood of $x_1'$ sufficiently small that $U_2 := f^{m_1}(U_1')$ is of diameter at most one.

Now, since cap $J' > 0$, and $U_2$ is open and meets $J(f)$, there exist  $m_2 \in \N$ and $x_2' \in U_2$ such that 
\[ 
x_3 := f^{m_2}(x_2') \in J'.
\]
Let $U_2' \subset U_2$ be a neighbourhood of $x_2'$ sufficiently small that $U_3 := f^{m_2}(U_2')$ is of diameter at most one.

Continuing inductively, we obtain a sequence of domains $(U_n)_{n\in\N}$, each of diameter at most one, and a sequence of integers $(m_n)_{n\in\N}$ such that $f^{m_n}(U_n) \supset U_{n+1}$, and $U_n$ meets $J_n$ when $n$ is even, and $J'$ when $n\geq 3$ is odd.

An application of Lemma~\ref{lemm:l1} gives that there is a point 
\[
\xi \in \overline{U_1} \cap BU(f) \cap J(f).
\] 
Since $f^n(\xi) \in BU(f) \cap J(f)$, for $n \geq 0$, we obtain that $BU(f) \cap J(f)$ is infinite.

Since $x_1$ and $U_1$ were arbitrary, it follows that $J(f) \subset \overline{BU(f)}$. It is known that $J(f) \subset \partial I(f) \cap \partial BO(f)$ \cite[Theorem 1.3]{MR3265283}. Thus $J(f) \subset \partial BU(f)$, and so \eqref{e:inclusions} holds. This completes the proof of Theorem~\ref{theo:capnonzero}, and also of Theorem~\ref{theo:itsinfinite} when cap $J(f) > 0$. \\

It remains to prove Theorem~\ref{theo:itsinfinite} in the case that cap $J(f) = 0$, so we now assume that the Julia set has capacity zero. It follows by \cite[Corollary 1.1]{MR3265283} that $f$ has the pits effect.

Let $(s_n)_{n\in\N}$ and $(t_n)_{n\in\N}$ be as given in Lemma~\ref{lem:dan}. Set $V_n := A(s_n, t_n)$, for $n \in \N$. We may assume that $B(2t_n)$ meets $J(f)$ for all $n \in \N$, so \eqref{eq:dan} and complete invariance imply that
\[
V_n \cap J(f) \ne \emptyset, \qfor n\in \N.
\]

By \eqref{eq:dan} again,
\[
f(V_n) \supset B(2t_n) \supset V_1, \qfor n \in \N,
\]
and, moreover, if $m_n \in \N$ is sufficiently large that $2^{m_n} \geq t_n/t_1$, then
\[
f^{m_n}(V_1) \supset B(2^{m_n}t_1) \supset V_n, \qfor n \in \N.
\]

An application of Lemma~\ref{lemm:l1} (with $E_n = \overline{V_1}$ for odd $n$, and $E_n = \overline{V_n}$ for even $n$) gives that there is a point 
\[
\xi \in \overline{V_1} \cap BU(f) \cap J(f),
\]
because we have forced oscillation of the orbit. As earlier, it follows that $BU(f) \cap J(f)$ is infinite.
\end{proof}
%
%
%
%
\section{Examples}
\label{S:quasiconformalexamples}
In this section we first prove Theorem~\ref{theo:quasiconformalexample}, and then use the function constructed to prove Theorem~\ref{theo:JnotboundaryBU}.

\begin{proof}[Proof of Theorem~\ref{theo:quasiconformalexample}]
From now on we identify $\R^2$ with $\C$ in the obvious way. We construct a quasiconformal map $f : \C \to \C$ such that $BU(f)~\ne~\emptyset$. First we fix $y_0 > 100$, and let $T_0$ be the domain
$$
T_0 := \{ x + iy : y > y_0, \ |x| < 1/y \}.
$$
We define a continuous map $\psi : \overline{T_0} \to \overline{T_0}$ as follows. If $x + iy \in \overline{T_0}$, then we set
\begin{equation}
\label{psidef}
\psi(x + iy) := \frac{xy}{y + 1/y - |x|} + i(y + 1/y - |x|).
\end{equation}
Note that $\psi$ is the identity map on the two vertical sides of $T_0$. Note in addition that  
\begin{equation}
\label{toinf}
\psi^n(z) \rightarrow\infty \text{ as } n\rightarrow\infty, \qfor z = 0 + iy \text{ where } y > y_0.
\end{equation}

We show that $\psi$ is quasiconformal on $T_0$ by estimating the derivative. By differentiating \eqref{psidef} we obtain that, as $y \rightarrow\infty$, 
\begin{equation*}
D\psi(x+iy) = 
\left(
\begin{array}{cc}%
	1 + O(y^{-2}) & O(y^{-2}) \\
	\pm 1 & 1 + O(y^{-2})
\end{array}
\right), \qfor (x + iy) \in T_0.
\end{equation*} 
It follows that $\psi$ is indeed quasiconformal on $T_0$.

Roughly speaking $\overline{T_0}$ is an infinite ``straight snake''. We now seek to define a quasiconformal map $\phi$ on $T_0$, homeomorphic up to the boundary, such that $\overline{\phi(T_0)}$ is a ``coiled snake''. Moreover half the bends in this snake will have imaginary parts tending to infinity, whereas the remaining bends will be within a fixed distance of the origin. 

To construct this map, we first need to fix two particular quasiconformal maps. Let $A$ be the rectangle 
\[ 
A := \{ z : \operatorname{Re}(z) \in [0, 1], \operatorname{Im}(z) \in [0, 2]\},
\]
and let $B$ be the half-annulus 
\[ 
B := \{ z : \operatorname {Im}(z) \geq 0, 1/2 \leq |z - 3/2| \leq 3/2 \}.
\] 
We define a map $\nu_r : A \to B$ by
\begin{equation}
\label{nurdef}
\nu_r(x+iy) := 3/2 + (x-3/2)e^{-i \pi y/2}.
\end{equation}

It can be checked that $\nu_r$ is a quasiconformal map on the interior of $A$. It is also easy to check that $\nu_r$ is the identity on the lower boundary of $A$, maps each vertical line segment ending at a point on the lower boundary of $A$ to a semi-circle in $B$, and maps the upper boundary of $A$ to the right-hand lower boundary of $B$ by an affine transformation. 

The second quasiconformal map is 
\begin{equation}
\label{nuldef}
\nu_l(x+iy) := -1/2 + (x+1/2)e^{i \pi y/2}.
\end{equation}
This maps $A$ to the half annulus \[ \{ z : \operatorname{Im}(z) \geq 0, 1/2 \leq |z + 1/2| \leq 3/2 \},\] once again fixing the lower boundary of $A$.

Let the sequences $(s_n)_{n\in\N}$ and $(t_n)_{n\in\N}$ of positive real numbers be defined by $t_n := 2^n$, $s_1 := y_0$ and then 
\[ 
s_{n+1} := s_n + 2t_n + 4/(s_n + t_n) + 4/(s_n + 2t_n + 4/(s_n + t_n)).
\] 
Note that
\begin{equation}
\label{tsum}
\sum_{n=0}^\infty 1/t_n < \infty.
\end{equation}
Roughly speaking $t_n$ will be the height of the $n$th bend of the snake, and $s_n$ will measure the total distance along the snake to the start of the $n$th bend. Note that $s_{n+1}$ is only approximately equal to $s_n + 2t_n$; the additional terms correspond to the ``corners'' of the bends. See Figure~\ref{f1}.

We now divide the set $\overline{T_0}$ into infinitely many collections of four closed approximate rectangles. In particular, for each $n \in \N$ we define:
\begin{itemize}
\item A strip of height $t_n$ given by $$S^1_n := \overline{T_0} \cap \{ x + iy : s_n \leq y \leq s_n + t_n \}.$$

\item A small (approximate) rectangle, of height twice its width, given by $$S^2_n :=\overline{T_0} \cap \{ x + iy : s_n + t_n \leq y \leq s_n + t_n + 4/(s_n + t_n) \}.$$ 

\item A second strip of height $t_n$ given by $$S_n^3 := \overline{T_0} \cap \{ x + iy : s_n + t_n + 4/(s_n + t_n) \leq y \leq s_n + 2t_n + 4/(s_n + t_n)\}.$$

\item A second (approximate) rectangle, also of height twice its width, given by \[S_n^4 := \overline{T_0} \cap \{ x + iy : s_n + 2t_n + 4/(s_n + t_n) \leq y \leq s_{n+1} \}.\] 
\end{itemize}

We define $\phi$ by specifying it first on $S^1_1$, then on $S^2_1$, then on $S^3_1$, and so on ``up'' $T_0$. Note that the rectangles above meet where upper and lower boundaries coincide, but we will ensure that the definitions of $\phi$ respect this. In addition, the upper and lower boundaries will be mapped only by affine transformations. 

First we define $\phi$ on the lowest collection of four rectangles in $T_0$.
\begin{itemize}
\item On $S^1_1$ we let $\phi$ be the identity.

\item The action of $\phi$ on $S^2_1$ is defined as follows. First translate $S^2_1$ so that its bottom left corner lies at the origin. Then enlarge it by a scale factor of $(s_1+t_1)/2$, so that it maps into $A$, and then map it by the function $\nu_r$ defined in \eqref{nurdef}. Then scale it by a scale factor of $2/(s_1+t_1)$, and translate it so the left-hand lower boundary of the image coincides with the upper boundary of $\phi(S^1_1)$. (Observe here that the enlarged translation of $S^2_1$ is only a subset of the rectangle $A$. This does not affect the argument).
 
\item The action of $\phi$ on $S^3_1$ is defined by first rotating by one half-turn, and then translating so that the upper boundary of the image of $S_1^3$ coincides with the right-hand lower boundary of $\phi(S_1^2)$.

\item The action of $\phi$ on $S^4_1$ is defined as follows, and is very similar to the action on $S^2_1$. First translate $S^4_1$ so that its bottom left corner lies at the origin. Then enlarge it by a scale factor of $(s_n + 2t_n + 4/(s_n + t_n))/2$ to obtain a subset of $A$. Then apply the map $\nu_l$ defined in \eqref{nuldef}, followed by an second scaling with scale factor equal to $2/(s_n + 2t_n + 4/(s_n + t_n))$. Finally rotate by one half-turn, and then translate so that the upper left-hand boundary of the image of $S_1^4$ coincides with the lower boundary of $\phi(S_1^3)$.

\end{itemize}

\begin{figure}
	\includegraphics[width=16cm,height=10cm]{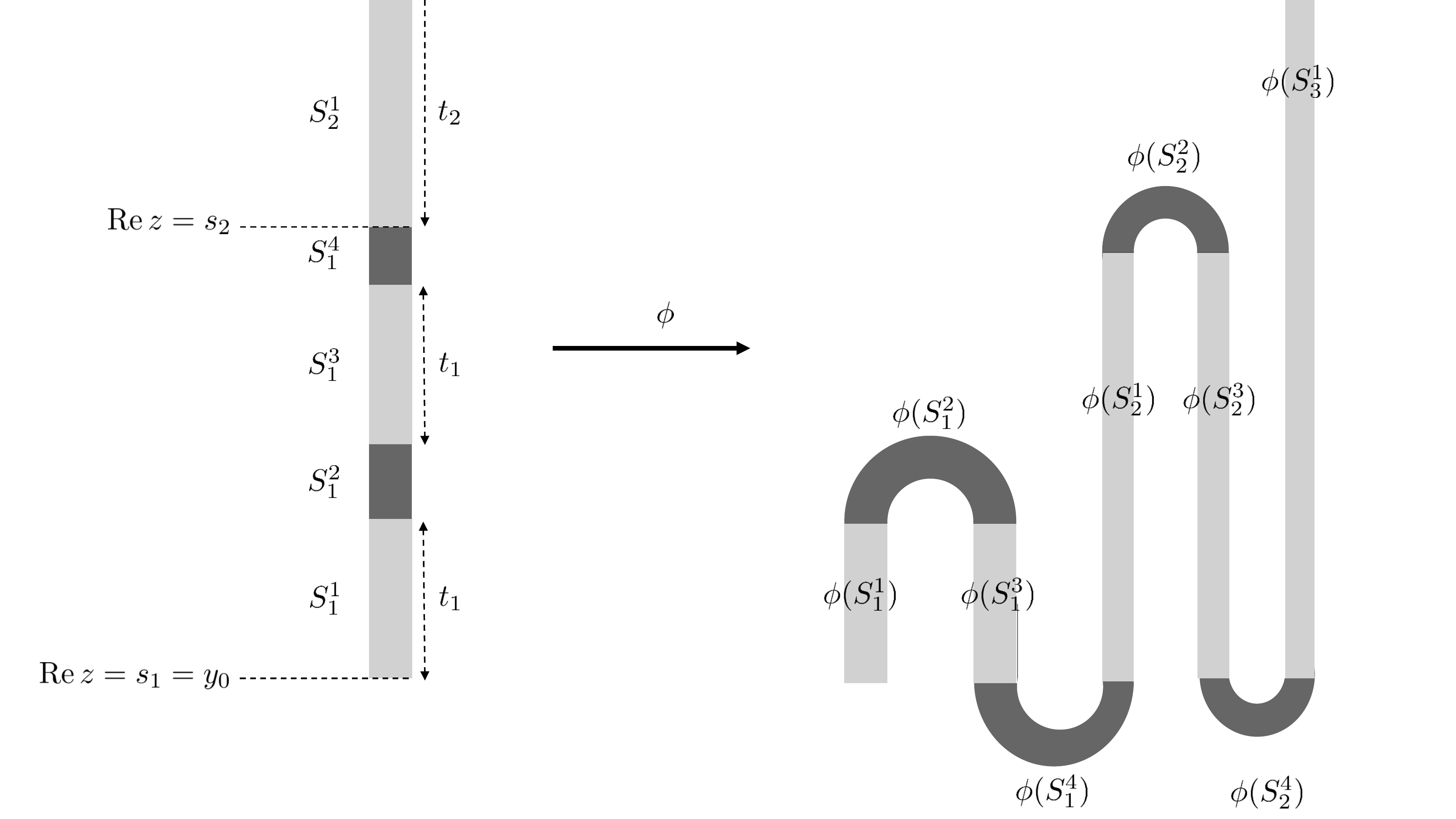}
  \caption{A rough schematic of the construction of the map $\phi$.
	}\label{f1}
\end{figure}

It is now clear how to continue this process; we iterate the four steps above, although with different translations at each stage to ensure continuity at the boundary. In particular, for each $n \geq 2$, $\phi$ maps $S^1_n$ by a translation, rather than the identity. See Figure~\ref{f1}. Note that it follows from \eqref{tsum} that the snake remains within a strip of bounded real part.

In order to see that $\phi$ is quasiconformal on $T_0$, we now check that subsequent sections of the snake do not overlap; that is, for each $n \in \N$, the sets $\phi(S_n^1)$, $\phi(S_n^3)$ and $\phi(S_{n+1}^1)$ are pairwise disjoint. To see this, fix $n \in \N$. Note that the base of the strip $\phi(S_n^1)$ is of width $2/s_n$, and the top of this strip is of width $2/(s_n + t_n)$. Also, by construction, the left-hand side of the strip $\phi(S_n^3)$ is at least $4/(s_n+t_n)$ from the left-hand side of the strip $\phi(S_n^1)$. Now, it follows from the definitions that $t_n < s_n$, and hence $2/s_n < 4/(s_n + t_n)$. Thus the strips $\phi(S_n^1)$ and $\phi(S_n^3)$ are pairwise disjoint. The proof that the strips $\phi(S_n^3)$ and $\phi(S_{n+1}^1)$ are also pairwise disjoint is similar and is omitted.

We are now able to define our quasiconformal map $f : \C \to \C$. First, set $\widetilde{T} := \phi(T_0)$. For $z \in \widetilde{T}$ we define $f(z) := (\phi \circ \psi \circ \phi^{-1})(z)$. It is easy to check that $f$ is quasiconformal on $\widetilde{T}$ and extends to the identity on all parts of the boundary of $\widetilde{T}$ apart from the line segment $\{ x + iy : y = y_0, \ |x| < 1/y_0 \}$.
 
We then extend $f$ to a map of the whole plane. First we let $R$ be the rectangle 
\[ R := \{ x + iy : y \in (0, y_0), \ |x| < 1/y_0 \}. \]

On $\C \setminus (\widetilde{T} \cup R)$ we let $f$ be the identity map. It is then straightforward, using, for example, \cite[Theorem 6]{SixsmithNicks2}, to see that $f$ can be extended to a quasiconformal map of the whole plane. Note that we are actually only interested in the behaviour of $f$ in $\widetilde{T}$; the rectangle $R$ is only used to allow us to extend the definition of $f$ to the whole plane.

It is now straightforward to see, by \eqref{toinf} and the geometry of $\widetilde{T}$, that 
\[ \phi(\{ x + iy : x = 0, y > y_0 \}) \subset BU(f), \]
and this completes the construction.
\end{proof}
Finally we prove Theorem~\ref{theo:JnotboundaryBU} by constructing a quasiregular map $h : \C \to \C$, of transcendental type, such that $\partial BU(h) \setminus J(h) \ne \emptyset$.
\begin{proof}[Proof of Theorem~\ref{theo:JnotboundaryBU}]
We first use a technique from \cite[Section 6]{MR2448586}, (see also \cite[Section 4]{MR3008885}), to define a quasiregular map $g : \C \to \C$ of transcendental type which is equal to the identity in the upper half-plane $\H$. 

In particular we choose $\delta > 0$ small, and then set
\[
g(z) := 
\begin{cases}
z, &\text{for } \operatorname{Im} z \geq 0, \\
z - \delta (\operatorname{Im} z) \exp(-z^2), &\text{for } \operatorname{Im} z \in [-1, 0), \\
z + \delta \exp(-z^2), &\text{otherwise}.
\end{cases}
\]
It can be shown by a calculation that if $\delta$ is sufficiently small, then $g$ is quasiregular. It is clearly of transcendental type. 

Now, let $f$ be the quasiconformal map constructed in the proof of Theorem~\ref{theo:quasiconformalexample}. We note that the ``snake'' $\widetilde{T}$ constructed in the proof of that result lies in $\H$. We set $h := g \circ f$. 

Since $f(\H) \subset \H$, we have that $h(\H) \subset \H$, and so $\H \cap J(h) = \emptyset$. Since $g$ is the identity on $\widetilde{T}$, the maps $f$ and $h$ have the same dynamics on $\widetilde{T}$. It follows that $$\H \cap BO(h) \ne \emptyset \quad\text{and}\quad \H \cap BU(h) \ne \emptyset.$$ Hence, in particular, $\H$ meets $\partial BU(h) \setminus J(h)$.
\end{proof}
%
%
%
\section{Proof of Theorem~\ref{theo:capJ}}
\label{S:capJ}
Suppose that $f : \R^d \to \R^d$ is a quasiregular map of transcendental type. It is known that if \eqref{eq:grows} holds, then $J(f) = \partial A(f)$ \cite[Theorem 1.2]{MR3265357}. Here $A(f)$ is the \emph{fast escaping set}, which is a subset of the escaping set consisting of points that iterate to infinity at a rate comparable to iteration of the maximum modulus; the exact definition is not needed here. 

Now, the set $A(f)$ contains continua \cite[Theorem 1.2]{MR3215194}, and so has positive capacity. Moreover, the complement of $A(f)$ contains $BO(f)$, and so also has positive capacity \cite[Theorem 1.4]{MR3265283}. 

Suppose that cap $\partial A(f) = 0$. It follows by \cite[Corollary 2.2.5]{MR1238941} that $\partial A(f)$ is totally disconnected, and so $\R^d \setminus \partial A(f)$ is connected. Hence either $A(f) \subset \partial A(f)$ or $\R^d \setminus A(f) \subset \partial A(f)$. This is impossible, as a set of positive capacity cannot be contained in a set of zero capacity. Hence cap $J(f) = $ cap $\partial A(f) > 0,$ as required. \\

%
%
%
%
%
%
%

\emph{Acknowledgment:} The authors are grateful to the referee for many helpful comments.
%
%
%
%
%
%

\end{document}